\DeclareSymbolFont{AMSb}{U}{msb}{m}{n}
\DeclareMathSymbol{\N}{\mathbin}{AMSb}{"4E}
\DeclareMathSymbol{\Z}{\mathbin}{AMSb}{"5A}
\DeclareMathSymbol{\R}{\mathbin}{AMSb}{"52}
\DeclareMathSymbol{\Q}{\mathbin}{AMSb}{"51}
\DeclareMathSymbol{\I}{\mathbin}{AMSb}{"49}
\DeclareMathSymbol{\C}{\mathbin}{AMSb}{"43}
\newcommand{\dbl}{[\hspace{-0.2ex}[}
\newcommand{\dbr}{]\hspace{-0.2ex}]}
\newcommand{\db}[1]{\dbl {#1} \dbr}
\newcommand{\res}[1]{\hspace{-0.6mm}\downarrow_{\hspace{-0.25mm}{#1}}}
\newcommand{\ind}[1]{\hspace{-0.6mm}\uparrow^{\hspace{-0.25mm}{#1}}}
\newcommand{\ctens}{\widehat{\otimes}}
\newcommand{\iso}{\cong}
\newcommand{\ds}{\raisebox{0.5pt}{\,\big|\,}}
\newcommand{\X}{\mathfrak{X}}
\newcommand{\Y}{\mathfrak{Y}}
\newcommand{\norm}[1]{\textnormal{N}_{#1}}
\numberwithin{equation}{section}
\title{Green Correspondence for Virtually Pro-$p$ Groups}
\begin{document}

\newtheorem{defn}[equation]{Def{i}nition}
\newtheorem{prop}[equation]{Proposition}
\newtheorem{lemma}[equation]{Lemma}
\newtheorem{theorem}[equation]{Theorem}
\newtheorem{corol}[equation]{Corollary}
\newtheorem{question}[equation]{Questions}

\maketitle

\section{Introduction}
Let $p$ be a prime number, $G$ a finite group, $Q$ a $p$-subgroup of
$G$ and $L$ any subgroup of $G$ containing the normalizer
$\norm{G}(Q)$ of $Q$ in $G$. Let $k$ be a field of positive
characteristic $p$. In \cite{greencorrespondencepaper} J.A. Green
demonstrates a fundamental correspondence between finitely generated
$kG$-modules with vertex $Q$ and finitely generated $kL$-modules
with vertex $Q$. When $L=\norm{G}(Q)$ the Green correspondence
allows for the reduction of many questions about general modules to
questions about modules with a normal vertex.

Now let $G$ be a profinite group and $k$ a finite field of
characteristic $p$.  In \cite{profiniterelprojarxiv} we took some first
steps towards a modular representation theory of profinite groups.
In particular we demonstrated a classification theorem for
relatively projective finitely generated $k\db{G}$-modules,
introduced vertices and sources, and showed that the expected
uniqueness properties hold for these objects (under additional
hypotheses in the case of sources).  Here we generalize the Green
correspondence (properly interpreted) to the class of virtually
pro-$p$ groups.  We will reference \cite{profiniterelprojarxiv}
frequently, since many necessary foundational results are discussed
therein.

Our main result is the following:

\begin{theorem}\label{GCprofinite}
Let $G$ be a virtually pro-$p$ group, let $Q$ be a closed pro-$p$
subgroup of $G$ and let $L$ be a closed subgroup of $G$ containing
$\norm{G}(Q)$.  Let $S$ be a finitely generated indecomposable
profinite $k\db{Q}$-module with vertex $Q$.  Then there is a canonical
bijection between the set of isomorphism classes of indecomposable
profinite $k\db{L}$-modules with vertex $Q$ and source $S$, and the
set of isomorphism classes of indecomposable profinite
$k\db{G}$-modules with vertex $Q$ and source $S$.
%
%More explicitly, under this bijection an indecomposable $k\db{L}$-module $V$ with vertex $Q$
%and source $S$ corresponds to the unique indecomposable summand of $V\ind{G}$ with vertex $Q$.

More explicitly, if $V$ is an indecomposable $k\db{L}$-module with vertex $Q$
and source $S$, then the correspondent of $V$ under the above bijection is the unique
indecomposable summand of $V\ind{G}$ having vertex $Q$.
\end{theorem}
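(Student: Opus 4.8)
The plan is to follow Green's original argument in streamlined form, with ordinary induction and restriction replaced by $(-)\ind{G}=k\db{G}\,\ctens_{k\db{L}}(-)$ and its right adjoint, ordinary group algebras by completed ones, and finite direct-sum decompositions by the Krull--Schmidt decompositions available for finitely generated profinite $k\db{G}$-modules. First I would introduce the two families of closed subgroups
$$\X=\{\,Q\cap{}^{g}Q:g\in G\setminus L\,\},\qquad \Y=\{\,L\cap{}^{g}Q:g\in G\setminus L\,\},$$
and establish their basic features: every member of $\X$ is a closed subgroup of $Q$ and every member of $\Y$ a closed subgroup of $L$; because $L\supseteq\norm{G}(Q)$, no $G$-conjugate of $Q$ is subconjugate to a member of $\X$, and $Q$ is not $L$-subconjugate to a member of $\Y$; consequently no finitely generated indecomposable module with vertex $Q$ is relatively $\X$-projective over $k\db{G}$ or relatively $\Y$-projective over $k\db{L}$. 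In the profinite setting these non-subconjugacy statements do not follow from cardinality and must instead be deduced from the uniqueness properties of vertices and sources established in \cite{profiniterelprojarxiv}.

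The one substantive Mackey-theoretic input is the decomposition: if $V$ is a finitely generated indecomposable profinite $k\db{L}$-module with vertex $Q$, then
$$V\ind{G}\res{L}\ \cong\ V\oplus A,\qquad A\ \textnormal{relatively }\Y\textnormal{-projective},$$
with $V$ occurring with multiplicity one. This should be read off from the expansion of $V\ind{G}\res{L}$ along the double cosets $L\backslash G/L$: the trivial double coset contributes $V$, and for $g\notin L$ the term $({}^{g}V\res{{}^{g}L\cap L})\ind{L}$ is relatively $({}^{g}Q\cap L)$-projective --- since ${}^{g}V$ is relatively ${}^{g}Q$-projective and ${}^{g}Q\cap{}^{g}L\cap L={}^{g}Q\cap L$ --- with ${}^{g}Q\cap L\in\Y$; that $V$ is actually a summand (not merely a subquotient) of $V\ind{G}\res{L}$ uses that $V$, having vertex $Q\subseteq L$, is relatively $L$-projective. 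Two further compatibilities of the same kind are needed and proved alongside it: a relatively $\X$-projective $k\db{G}$-module restricts to a relatively $\Y$-projective $k\db{L}$-module, and a relatively $\Y$-projective $k\db{L}$-module induces to a relatively $\X$-projective $k\db{G}$-module.

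Granting these, the bijection $f$ is constructed as in the classical case. Let $V$ be indecomposable over $k\db{L}$ with vertex $Q$ and source $S$. Since $S$ is a source of $V$, $V\mid S\ind{L}$, hence $V\ind{G}\mid S\ind{G}$ and every indecomposable summand of $V\ind{G}$ has vertex subconjugate to $Q$ and source $S$; combining $V\mid V\ind{G}\res{L}$, the displayed decomposition, the two compatibilities, and Krull--Schmidt, one shows that exactly one indecomposable summand $f(V)$ of $V\ind{G}$, occurring with multiplicity one, has vertex $Q$ (equivalently, is not relatively $\X$-projective), and that $f(V)\res{L}\cong V\oplus(\textnormal{relatively }\Y\textnormal{-projective})$. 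From the latter, $V$ is recovered from $f(V)$ as the unique non-relatively-$\Y$-projective indecomposable summand of $f(V)\res{L}$, so $f$ is injective and visibly source-preserving. For surjectivity, let $M$ be indecomposable over $k\db{G}$ with vertex $Q$ and source $S$; then $M\mid S\ind{G}=S\ind{L}\ind{G}$, so $M\mid U\ind{G}$ for some indecomposable summand $U$ of $S\ind{L}$; as $U\mid S\ind{L}$ its vertex is subconjugate to $Q$, while $M\mid U\ind{G}$ forces $Q$ to be subconjugate to the vertex of $U$, so $U$ has vertex $Q$ and source $S$, and $M$ is a vertex-$Q$ summand of $U\ind{G}$; by the uniqueness just established, $M=f(U)$. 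Thus $f$ is a bijection between the stated sets of isomorphism classes, and by construction $f(V)$ is precisely the unique indecomposable summand of $V\ind{G}$ of vertex $Q$.

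I expect the principal obstacle to be the displayed Mackey decomposition and the finiteness it must respect. Over a finite group the double-coset sum $L\backslash G/L$ is finite and Krull--Schmidt applies term by term, whereas over a virtually pro-$p$ group it need not be finite while $k\db{G}\,\ctens_{k\db{L}}(-)$ preserves finite generation, so a literal infinite direct sum cannot be correct; one must instead work over the finite quotients $G/N$ for $N$ open normal in $G$ and pass to the inverse limit, or show directly that only finitely many double cosets contribute summands that are not relatively $\Y$-projective and absorb the remainder into a single relatively projective module. Verifying along the way that the resulting summands are finitely generated profinite modules lying within the scope of the vertex and source theory of \cite{profiniterelprojarxiv}, so that Krull--Schmidt and the uniqueness statements may legitimately be applied, is where the real work lies; once it is in place, the rest of the argument is a faithful transcription of the classical one.
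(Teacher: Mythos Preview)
Your surjectivity argument is essentially the paper's, but your injectivity argument has a genuine gap. You propose to recover $V$ from $f(V)$ as the unique non-$\Y$-projective indecomposable summand of $f(V)\res{L}$, relying on the assertion $f(V)\res{L}\cong V\oplus(\textnormal{relatively }\Y\textnormal{-projective})$. This fails when $L$ is not open in $G$. First, $f(V)\res{L}$ need not be a finitely generated $k\db{L}$-module, so the Krull--Schmidt theorem of \cite{profiniterelprojarxiv} does not apply to it and speaking of its indecomposable summands is not legitimate. Second, and more decisively, even the weaker statement $V\mid f(V)\res{L}$ can fail: the paper explicitly records this at the start of Section~\ref{general L GC}, pointing to a counterexample in \cite{symondsdoublecoset}. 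Your final paragraph correctly senses that the Mackey decomposition over $L\backslash G/L$ is the trouble spot, but the proposed remedy of ``absorbing the remainder into a single relatively projective module'' will not rescue the restriction-based injectivity argument, because the conclusion you want to draw from it is simply false in general.

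The paper's route around this is structurally different from yours. It first proves the full classical correspondence (including the restriction statement) only under the additional hypothesis that $L$ is \emph{open} in $G$; this is where Mackey and the arguments you sketch do go through, with some care involving compactness arguments over a cofinal system of open normal $N$ with $\norm{G}(QN)\le L$. For general closed $L$, the paper then shows (Lemma~\ref{induced module has same vertex}) that $V\ind{LM}$ is indecomposable with vertex $Q$ for suitable open normal $M$, and applies the open-case correspondence between $LM$ and $G$ to obtain the unique vertex-$Q$ summand $g(V)$ of $V\ind{G}$. Injectivity is proved not by restricting $g(V)$ to $L$ but by observing that if $g(V)\cong g(W)$ then $V\ind{LN}\cong W\ind{LN}$ for all $N$ in a cofinal system (both being the open-case Green correspondent of the same module), whence $V_{L\cap N}\cong W_{L\cap N}$ by taking $N$-coinvariants, and finally $V\cong W$ by the inverse-limit criterion of \cite{profiniterelprojarxiv}. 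You should replace your restriction-based injectivity argument with this two-stage reduction.
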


We approach the proof in two main steps.  We first demonstrate a
correspondence which is word-for-word analogous to the finite case
under the additional assumption that $L$ is open in $G$.  Using this
special case we then demonstrate the truth of the above theorem.
First let us establish some notation to be assumed throughout our
discussion.

The main concepts mentioned in this paragraph are introduced and
discussed in \cite{profiniterelprojarxiv}.  Let $G$ be a virtually
pro-$p$ group and $k$ a finite field of characteristic $p$.  All
modules are assumed to be profinite left modules.  If $U$ is a
$k\db{G}$-module and $N$ is a closed normal subgroup of $G$, then we
denote by $U_N$ the coinvariant module $k\ctens_{k\db{N}}U$ - note that this
is not a restriction.  If $U$
is finitely generated and $N$ is open in $G$, then $U_N$ is finite.
If $U$ is non-zero, finitely generated and indecomposable, then by
\cite[2.8, 2.9]{profiniterelprojarxiv} we can choose a cofinal inverse
system of open normal pro-$p$ subgroups of $G$ for which each $U_N$
is non-zero and indecomposable.  As usual, if $H$ is a closed subgroup of
$G$ and $V$ is a $k\db{H}$-module, then we denote by $V\ind{G}$ the $k\db{G}$-module
induced from $V$.  If $U$ is a $k\db{G}$-module, then we denote by $U\res{H}$ the
$k\db{H}$-module obtained by restricting the coefficients of $U$.

Let $Q$ be a closed pro-$p$ subgroup of $G$ and let $L$ be any
closed subgroup of $G$ containing $\norm{G}(Q)$.  We define the
following two sets of subgroups of $G$:
$$\X= \{X\leq_C G\,|\, X\leq xQx^{-1}\cap Q, x\notin L\}$$
$$\Y= \{Y\leq_C G\,|\, Y\leq xQx^{-1}\cap L, x\notin L\}.$$
If $\mathfrak{H}$ is a collection of subgroups of $G$, then we say a
finitely generated $k\db{G}$-module $U$ is  relatively
$\mathfrak{H}$-projective if each indecomposable summand of $U$ is
projective relative to an element of $\mathfrak{H}$.  As in the
finite case we note that $\X$ consists of proper subgroups of $Q$,
while $\Y$ may contain a conjugate of $Q$.

\section{The case where $L$ is open}\label{open L GC}

Essentially following the treatment in \cite[3.12]{benson} we prove
three lemmas which constitute the bulk of the work for the case of
open $L$.

\begin{lemma}\label{summandsofVupanddown}
Let $V$ be a finitely generated indecomposable $Q$-projective
$k\db{L}$-module. Then $V\ind{G}\res{L}\iso V\oplus V_1$, where
$V_1$ is $\Y$-projective.
\end{lemma}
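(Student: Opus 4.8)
The plan is to reproduce the classical argument of \cite[3.12]{benson}, the main inputs being the profinite versions of Mackey's decomposition formula (and the fact that induction from a closed subgroup preserves finite generation), both available from \cite{profiniterelprojarxiv}. Since $L$ is open in $G$ the double coset space $L\backslash G/L$ is finite; fix a finite set of representatives consisting of $e$ (representing the trivial double coset $L$) together with a set $R$ with $R\cap L=\emptyset$. First I would apply Mackey's formula to $V\ind{G}\res{L}$, obtaining
$$V\ind{G}\res{L}\;\iso\;\bigoplus_{x}\;\bigl({}^{x}V\res{L\cap{}^{x}L}\bigr)\ind{L},$$
the sum being over the chosen representatives $x$, where ${}^{x}L=xLx^{-1}$ and ${}^{x}V$ denotes the $x$-conjugate $k\db{{}^{x}L}$-module. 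Each group $L\cap{}^{x}L$ is open in $L$ (and in ${}^{x}L$), so every restriction and induction occurring here is between open subgroups and all the summands remain finitely generated; Krull--Schmidt for finitely generated profinite modules (from \cite{profiniterelprojarxiv}) justifies working with these decompositions. The representative $x=e$ contributes the single term $V\res{L}\ind{L}\iso V$, so, setting $V_1=\bigoplus_{x\in R}\bigl({}^{x}V\res{L\cap{}^{x}L}\bigr)\ind{L}$, it remains only to show that $V_1$ is $\Y$-projective.

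Fix $x\in R$, so that $x\notin L$. As $V$ is $Q$-projective it is a direct summand of $(V\res{Q})\ind{L}$, and applying the conjugation equivalence shows ${}^{x}V$ is a direct summand of $\bigl({}^{x}(V\res{Q})\bigr)\ind{{}^{x}L}$, a module induced from the closed subgroup ${}^{x}Q=xQx^{-1}$ of ${}^{x}L$. I would now restrict to the open subgroup $L\cap{}^{x}L$ and apply Mackey's formula once more to $\bigl({}^{x}(V\res{Q})\bigr)\ind{{}^{x}L}\res{L\cap{}^{x}L}$: the result is a finite direct sum of modules, each induced up to $L\cap{}^{x}L$ from a closed subgroup of the form $(L\cap{}^{x}L)\cap{}^{z}({}^{x}Q)=(L\cap{}^{x}L)\cap{}^{zx}Q$, where $z$ ranges over $(L\cap{}^{x}L)\backslash{}^{x}L/{}^{x}Q$, a finite set because $L\cap{}^{x}L$ is open in ${}^{x}L$. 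Writing $z=x\ell x^{-1}$ with $\ell\in L$ gives $zx=x\ell\notin L$, so $(L\cap{}^{x}L)\cap{}^{zx}Q$ is a closed subgroup of ${}^{zx}Q\cap L$ with $zx\notin L$, hence an element of $\Y$. Therefore ${}^{x}V\res{L\cap{}^{x}L}$, being a summand of this sum, is $\Y$-projective.

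Finally I would induce back up to $L$. Induction preserves relative projectivity --- if a module is a summand of something induced from a subgroup $D$, then by transitivity of induction so is its induction to any overgroup --- so $\bigl({}^{x}V\res{L\cap{}^{x}L}\bigr)\ind{L}$ is $\Y$-projective for every $x\in R$, and hence so is the direct sum $V_1$, which completes the argument. I expect the conjugation bookkeeping (identical to the finite-group case) to be routine; the point needing genuine care is the verification that the two applications of Mackey's formula really do yield honest finite direct sum decompositions in the category of finitely generated profinite modules over the relevant completed group algebras --- this is precisely where the openness of $L$, and of the intermediate groups $L\cap{}^{x}L$, is used, and it is here that one must appeal to the foundational development of \cite{profiniterelprojarxiv}.
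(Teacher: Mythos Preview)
Your proposal is correct and follows essentially the same route as the paper's proof: apply Mackey to $V\ind{G}\res{L}$, isolate the identity-coset summand $V$, and for each nontrivial representative $x\notin L$ use the $Q$-projectivity of $V$ (hence ${}^{x}Q$-projectivity of ${}^{x}V$) together with a second application of Mackey to land in subgroups of the form ${}^{x\ell}Q\cap L$ with $x\ell\notin L$, i.e.\ in $\Y$. The only differences are cosmetic---the paper chooses an arbitrary $k\db{{}^{x}Q}$-module $S$ with ${}^{x}V\ds S\ind{{}^{x}L}$ rather than the explicit $S={}^{x}(V\res{Q})$, and it cites the Mackey formula from \cite{symondsdoublecoset} rather than \cite{profiniterelprojarxiv}.
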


\begin{proof}
By the Mackey decomposition formula \cite[2.2]{symondsdoublecoset}
we have
$$V\ind{G}\res{L}\iso \bigoplus_{x\in L\backslash G/L}x(V)\res{xLx^{-1}\cap L}\ind{L}=V\oplus\bigoplus_{x\in L\backslash G/L, x\notin L}x(V)\res{xLx^{-1}\cap L}\ind{L}$$
so we need only show that a summand of the form
$x(V)\res{xLx^{-1}\cap L}\ind{L}$ with $x\notin L$ is
$\Y$-projective.  By \cite[5.2]{profiniterelprojarxiv} the module $x(V)$
is projective relative to $xQx^{-1}$ so by
\cite[3.7]{profiniterelprojarxiv} we can choose a $k\db{xQx^{-1}}$-module
$S$ such that $x(V)\ds S\ind{xLx^{-1}}$.  Then
$$x(V)\res{xLx^{-1}\cap L}\ind{L}\ds S\ind{xLx^{-1}}\res{xLx^{-1}\cap L}\ind{L}\iso\bigoplus_y y(S)\res{(yx)Q(yx)^{-1}\cap xLx^{-1}\cap L}\ind{L}$$
where $y$ runs through a set of double coset representatives of
$$(xLx^{-1}\cap L)\backslash xLx^{-1}/xQx^{-1}.$$  Note that $yx=xlx^{-1}x=xl$ for some $l\in L$ and
$x\notin L$ implies $xl\notin L$ so that each $((yx)Q(yx)^{-1}\cap
xLx^{-1}\cap L)\in \Y$.  Hence the module $x(V)\res{xLx^{-1}\cap
L}\ind{L}$ is relatively $\Y$-projective as required.
\end{proof}

\begin{lemma}\label{summandsofVinduced}
Let $V$ be a finitely generated indecomposable $Q$-projective
$k\db{L}$-module. Then $V\ind{G}\iso U\oplus U_1$, where $U$ is
indecomposable, $V\ds U\res{L}$, and $U_1$ is $\X$-projective.
\end{lemma}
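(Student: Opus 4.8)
The plan is to apply the previous lemma together with the Krull--Schmidt theorem (available for finitely generated profinite $k\db{G}$-modules), and to feed in the fact that $\X$ and $\Y$ behave well under restriction and induction between $L$ and $G$. First I would decompose $V\ind{G}$ into a direct sum of indecomposable $k\db{G}$-modules, say $V\ind{G}\iso U\oplus U_1$ where $U$ is chosen to be an indecomposable summand such that $V\ds U\res{L}$; such a $U$ exists because $V$ is a summand of $V\ind{G}\res{L}$ (by Lemma \ref{summandsofVupanddown}, $V\ind{G}\res{L}\iso V\oplus V_1$), so by Krull--Schmidt $V$ must be a summand of $W\res{L}$ for at least one indecomposable summand $W$ of $V\ind{G}$; take $U:=W$ and let $U_1$ be the complementary summand. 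It remains to show that $U_1$ is $\X$-projective.

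The key step is the following restriction-then-compare argument. Restricting the decomposition $V\ind{G}\iso U\oplus U_1$ down to $L$ and comparing with Lemma \ref{summandsofVupanddown} gives
$$V\oplus V_1\iso V\ind{G}\res{L}\iso U\res{L}\oplus U_1\res{L},$$
with $V_1$ being $\Y$-projective. Since $V\ds U\res{L}$, write $U\res{L}\iso V\oplus V'$; then by Krull--Schmidt $V'\oplus U_1\res{L}\iso V_1$ is $\Y$-projective, hence $U_1\res{L}$ is $\Y$-projective (a summand of a $\Y$-projective module is $\Y$-projective, since $\Y$-projectivity is defined summand-wise). Now each indecomposable summand of $U_1$ is a summand of $(U_1\res{L})\ind{G}$, and since $U_1\res{L}$ is a sum of modules each projective relative to some $Y\in\Y$, induction up to $G$ shows each indecomposable summand of $U_1$ is projective relative to some $Y\in\Y$ (using transitivity of relative projectivity along $Y\le L\le G$, i.e. \cite[3.7]{profiniterelprojarxiv} and the analogue of \cite[3.6]{profiniterelprojarxiv} in our setting). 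But every $Y\in\Y$ is contained in a conjugate of $Q$, and in fact $Y\le xQx^{-1}\cap L$ with $x\notin L$, so after conjugating inside $G$ we can replace the vertex of each such summand by a $G$-conjugate lying in $xQx^{-1}\cap Q$ with $x\notin L$; here one uses that $V$ (hence $U$, hence $U_1$) has vertex contained in $Q$, so the actual vertex of any indecomposable summand of $U_1$ is simultaneously $G$-subconjugate to $Q$ and to a "bad" conjugate $xQx^{-1}$, landing it in $\X$.

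The main obstacle I anticipate is precisely this last transition from $\Y$ to $\X$: one must argue that although $U_1\res{L}$ is only $\Y$-projective (and $\Y$ may contain conjugates of $Q$), the fact that $U_1$ is a $k\db{G}$-module with vertex $G$-subconjugate to $Q$ forces its indecomposable summands to have vertices lying in $\X$ rather than merely in $\Y$. In the finite case this is the standard interplay between the vertex of $U_1$ as a $G$-module and the Mackey formula; in the profinite setting I would run the argument through a cofinal system of open normal pro-$p$ subgroups $N$ (as recalled in the excerpt), reducing to the finite-group statement for $G/N$ and then passing to the inverse limit, taking care that vertices are compatible with the maps $U\mapsto U_N$ (using \cite[2.8, 2.9]{profiniterelprojarxiv}). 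The bookkeeping of which conjugates $x$ occur, and checking $x\notin L$ is preserved, is routine given the explicit descriptions of $\X$ and $\Y$, so the real content is the vertex-tracking through the inverse limit.
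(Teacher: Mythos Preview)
Your setup is correct and matches the paper: pick $U$ indecomposable with $V\ds U\res{L}$ via Krull--Schmidt, and deduce that $U_1\res{L}$ is $\Y$-projective by cancelling $V$ from both sides of $V\oplus V_1\iso U\res{L}\oplus U_1\res{L}$. The gap is in your final step. Knowing that an indecomposable summand $U'$ of $U_1$ is both $Q$-projective and $Y$-projective for some $Y\in\Y$ only tells you that its vertex $R$ can be chosen inside $Q$ and that $gRg^{-1}\leq xQx^{-1}$ for some $g\in G$ and some $x\notin L$; it does \emph{not} follow that $R\leq (gx)Q(gx)^{-1}\cap Q$ lies in $\X$, because nothing forces $gx\notin L$. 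Indeed $\Y$ can contain a full conjugate of $Q$, so your argument as written does not even exclude $R=Q$, yet $Q\notin\X$. The implication ``$G$-subconjugate to $Q$ and to a bad conjugate $xQx^{-1}$ $\Rightarrow$ $G$-conjugate into $\X$'' is simply false.

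The paper repairs this not by an abstract vertex comparison but by restricting one step further, from $L$ down to $QN$ for $N$ in a cofinal system with $\norm{G}(QN)\leq L$. One chooses $S\ds U'\res{QN}$ with $U'\ds S\ind{G}$, then $V'\ds U'\res{L}$ with $S\ds V'\res{QN}$, and applies Mackey to $T\ind{L}\res{QN}$ where $V'\ds T\ind{L}$ and $T$ is a $k\db{tQt^{-1}\cap L}$-module, $t\notin L$. The crucial point is that the Mackey coset representatives $l$ lie in $L$, so $lt\notin L$ and $S$ is $(ltQ(lt)^{-1}\cap QN)$-projective; hence $U'$ is projective relative to $xQx^{-1}\cap QN$ with $x\notin L$ depending on $N$. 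A compactness argument on the closed sets $C_N=\{x\in G\setminus L: U'\text{ is }(xQx^{-1}\cap QN)\text{-projective}\}$ then produces a single $x$ valid for all $N$, and intersecting gives $U'$ projective relative to $xQx^{-1}\cap Q\in\X$. This alignment of the conjugating elements across all $N$ is the genuine profinite content here; your alternative of passing to coinvariants $U\mapsto U_N$ and quoting the finite result would face exactly the same problem and is not ``routine''.
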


\begin{proof}
Since $V\ds V\ind{G}\res{L}$, by the Krull--Schmidt theorem
\cite[2.1]{symondspermcom2} there is an indecomposable summand $U$
of $V\ind{G}$ with $V\ds U\res{L}$. Write $V\ind{G}\iso U\oplus U_1$
and take an indecomposable summand $U'$ of $U_1$.  We wish to show
that $U'$ is relatively $\X$-projective.  Note that $U'$ is
relatively $Q$-projective.

Since $\norm{G}(Q)\leq L$ and $L$ is open, a standard compactness
argument allows us to consider a cofinal inverse system of open
normal subgroups $N$ of $G$ such that $\norm{G}(QN)\leq L$.  Fix
some $N$ in our system.  The module $U'$ is projective relative to
$QN$, so $U'\ds U'\res{QN}\ind{G}$ by \cite[3.7]{profiniterelprojarxiv}.
Since $U'\res{QN}$ is finitely generated, we can find some
indecomposable $k\db{QN}$-module $S$ such that $S\ds U'\res{QN}$ and
$U'\ds S\ind{G}$.  Now $U'\res{QN}\iso U'\res{L}\res{QN}$ so there
is an indecomposable finitely generated $k\db{L}$-module $V'$ such
that $V'\ds U'\res{L}$ and $S\ds V'\res{QN}$.  Note that $V'$ is a
direct summand of $V\ind{G}\res{L}$ distinct from $V$, so by
Lemma \ref{summandsofVupanddown} it is projective relative to a subgroup
of the form $tQt^{-1}\cap L$ with $t\in G, t\notin L$.  Let $T$ be a
$k\db{tQt^{-1}\cap L}$-module such that $V'\ds T\ind{L}$.  From the
Mackey decomposition theorem \cite[2.2]{symondsdoublecoset} we have
$$S\ds V'\res{QN}\ds T\ind{L}\res{QN} \iso \bigoplus_{l\in QN\backslash L/tQt^{-1}\cap L}l(T)\res{(lt)Q(lt)^{-1}\cap
                                    QN}\ind{QN}.$$
Since $t\notin L$ it follows that $S$ is projective relative to a
subgroup of the form $xQx^{-1}\cap QN$ for some $x\notin L$.  Since
$U'\ds S\ind{G}$ we have shown that for each $N$ in our system the
module $U'$ is projective relative to a subgroup of the form
$xQx^{-1}\cap QN$ for some $x\notin L$ that depends on $N$.

We would like to find some $x\in G, x\notin L$ for which $U'$ is
projective relative to $xQx^{-1}\cap QN$ for every $N$ in our
system. Denote by $C_N$ the non-empty set of $x\in G, x\notin L$ for
which $U'$ is relatively $[xQx^{-1}\cap QN]$-projective.
%Since $QN$
%is closed in $G$ and $C_N$ is a finite union of cosets of $QN$, it
%follows that $C_N$ is closed in $G$.
If ever $N\leq M$ and $x\in C_N$ then certainly $U'$ is projective
relative to $xQx^{-1}\cap QM$, so if $x\in C_N$ then $x\in C_M$.
Since each $C_N$ is closed in $G$ the standard compactness argument
now shows that $\bigcap_N C_N\neq\emptyset$.

Choose some $x\in G, x\notin L$ for which $U'$ is projective
relative to $xQx^{-1}\cap QN$ for each $N$ in our system.  By
\cite[4.2]{profiniterelprojarxiv} it follows that $U'$ is projective
relative to
$$\bigcap_N(xQx^{-1}\cap QN)=xQx^{-1}\cap(\bigcap_N QN)=xQx^{-1}\cap Q$$
as required.
\end{proof}

In the finite case the following lemma is an easy corollary of
Lemma \ref{summandsofVupanddown}.  In our more general context it requires
a little more care.

\begin{lemma}\label{appropriate summand of UresL}
Let $U$ be a finitely generated indecomposable $k\db{G}$-module with
vertex $Q$. There is a finitely generated indecomposable
$k\db{L}$-module $V$ with vertex $Q$ such that $U\ds V\ind{G}$ and
$V\ds U\res{L}$.
\end{lemma}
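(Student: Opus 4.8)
The plan is to start from the indecomposable $k\db{G}$-module $U$ with vertex $Q$ and extract an appropriate $k\db{L}$-summand of $U\res{L}$. Since $U$ has vertex $Q$, it is projective relative to $Q$, and by the relative projectivity criterion \cite[3.7]{profiniterelprojarxiv} we have $U\ds U\res{Q}\ind{G}$. As $U\res{Q}$ is finitely generated, it has an indecomposable summand $S_0$ with $U\ds S_0\ind{G}$; in fact $S_0$ can be taken to be a source of $U$. The strategy is to induce $S_0$ up to $L$ first: since $U\ds S_0\ind{G}=S_0\ind{L}\ind{G}$, the Krull--Schmidt theorem \cite[2.1]{symondspermcom2} yields an indecomposable summand $V$ of $S_0\ind{L}$ with $U\ds V\ind{G}$. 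This $V$ is $Q$-projective (being a summand of $S_0\ind{L}$ where $S_0$ is a $k\db{Q}$-module), and its vertex is a subgroup of $Q$ up to conjugacy, say contained in some conjugate $Q'\leq Q$.

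Next I would apply Lemma \ref{summandsofVinduced} to $V$: it gives a decomposition $V\ind{G}\iso U''\oplus U_1$ with $U''$ indecomposable, $V\ds U''\res{L}$, and $U_1$ relatively $\X$-projective. Since every element of $\X$ is a proper subgroup of $Q$, the summand $U_1$ has no indecomposable constituent with vertex $Q$; but $U\ds V\ind{G}$ and $U$ has vertex $Q$, so by uniqueness of indecomposable decompositions $U$ must be (isomorphic to) $U''$. Thus $V\ds U\res{L}$ and $U\ds V\ind{G}$. It remains to check that $V$ has vertex exactly $Q$. We already know $V$ is $Q$-projective, so its vertex is contained in some conjugate of $Q$. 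For the reverse containment: if $V$ had vertex strictly smaller, say $V$ is projective relative to some proper subgroup $R<Q$ (up to conjugacy), then $V\ind{G}$ — and hence its summand $U$ — would be projective relative to a conjugate of $R$, contradicting the fact that $U$ has vertex $Q$. Here I would use that the vertex of any summand of an induced module $W\ind{G}$ is subconjugate to the vertex of $W$ \cite[5.2, 5.4]{profiniterelprojarxiv}.

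The step requiring the most care is the passage between the finite quotients $U_N$ and the profinite module $U$, mirroring the difficulty flagged before Lemma \ref{appropriate summand of UresL}. Concretely, identifying "the" summand $V$ and verifying it has vertex $Q$ (not merely subconjugate to $Q$) should be done by choosing, via \cite[2.8, 2.9]{profiniterelprojarxiv}, a cofinal inverse system of open normal pro-$p$ subgroups $N$ for which $U_N$ is nonzero and indecomposable, running the finite-group argument in each $kG/N$-module $U_N$ to produce a compatible family of $kL/N$-module summands $V_N$ of $(U\res{L})_N$, and then taking the inverse limit $V=\invlim V_N$. One must check this limit is indecomposable, that $V_N\iso V/NV$ for each $N$, and that the vertex computation is compatible with the limit; the compactness/closedness arguments used in the proof of Lemma \ref{summandsofVinduced} (e.g.\ that the sets of "good" $x$ form a nested family of nonempty closed sets with nonempty intersection) are exactly the tools needed. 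Alternatively — and this is the cleaner route I would pursue — one avoids the inverse limit entirely by the argument in the first two paragraphs, which stays at the level of profinite modules throughout and only invokes Lemma \ref{summandsofVinduced} and the subconjugacy behaviour of vertices under induction and restriction; the only subtlety is then the vertex equality, handled as above.
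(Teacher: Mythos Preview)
Your cleaner route (the first two paragraphs) is correct and is genuinely different from the paper's argument. The paper does not invoke Lemma~\ref{summandsofVinduced} here at all; instead it works throughout in a cofinal inverse system of open normal $N\lhd_O G$ with $\norm{G}(QN)\leq L$, first running one compactness argument to show that $U\res{L}$ has some indecomposable summand with vertex $Q$, and then a second compactness argument together with Lemma~\ref{summandsofVupanddown} to show that among the finitely many summands $V\ds U\res{L}$ satisfying $U\ds V\ind{G}$ there is one with vertex $Q$. Your approach is shorter because Lemma~\ref{summandsofVinduced} already packages the hard compactness work: once you pick an indecomposable $V\ds S_0\ind{L}$ with $U\ds V\ind{G}$, Lemma~\ref{summandsofVinduced} forces $U$ to be the distinguished summand $U''$ (since the complement $U_1$ is $\X$-projective and $U$ has vertex $Q$), giving $V\ds U\res{L}$ immediately. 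The vertex equality then follows from the elementary fact that a proper closed subgroup $R<Q$ cannot contain a $G$-conjugate of $Q$ (pass to any finite quotient $G/N$ and compare orders), exactly as used at the end of the paper's Lemma~\ref{induced module has same vertex}. The paper's route makes the role of each finiteness hypothesis more visible; yours is more efficient and arguably more conceptual.

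Your third paragraph is unnecessary hedging. The inverse-limit construction you sketch there is closer in spirit to what the paper actually does, but since your direct argument via Lemma~\ref{summandsofVinduced} already works entirely at the profinite level, there is no residual ``passage to finite quotients'' left to justify. One small phrasing issue: when you say the vertex of $V$ is ``contained in some conjugate $Q'\leq Q$'', you just mean that a vertex of $V$ may be chosen inside $Q$ (since $V$ is $Q$-projective); this is standard and is used verbatim in the paper, e.g.\ at the start of the proof of Lemma~\ref{induced module has same vertex}.
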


\begin{proof}
We work in a cofinal inverse system of $N\lhd_O G$ with
$\norm{G}(QN)\leq L$. We first show that $U\res{L}$ has an
indecomposable summand with vertex $Q$. Since $U\ds U\res{L}\ind{G}$
we have $U\res{L}$ has at least one summand with vertex conjugate to
$Q$ in $G$.  Let $\mathcal{X}$ denote the non-empty set of
isomorphism classes of $V\ds U\res{L}$ having vertex conjugate to
$Q$. For each $N$, the fact that $U\ds U\res{QN}\ind{G}$ implies that $U\ds
V\res{QN}^L\ind{G}$ for some $V\in \mathcal{X}$, and so $U\ds
W\ind{G}$ for some $W\ds V\res{QN}$. Clearly $W$ has vertex
$yQy^{-1}\subseteq QN$.

Suppose $V$ has vertex $xQx^{-1}$ and let $S$ be a
$k\db{xQx^{-1}}$-module with $V\ds S\ind{L}$. Applying Mackey's
formula to $W\ds S\ind{L}\res{QN}$ it follows that $V$ has vertex
$L$-conjugate to a subgroup of $QN$, and so $V$ has a vertex
contained in $QN$.  Note that $\mathcal{X}$ is a finite set, so some
element of $\mathcal{X}$ must have vertex contained in $QN$ for a
cofinal subset of $N\lhd_O G$ and hence some element of
$\mathcal{X}$ has vertex $Q$.

Let $Z$ be an indecomposable summand of $U\res{L}$ and suppose that
for all $N$ in our system there is some $x\notin L$ such that $Z$ is
$xQNx^{-1}$-projective. Denote by $C_N$ the non-empty set of all
such $x\notin L$.  If $x\in C_N$ then $xq\in C_N$ for all $q\in QN$
and so each set $C_N$ is closed in $G$.

If $N\leq M$ and $x\in C_N$ then certainly $x\in C_M$.  By
compactness we now have that $\bigcap_N C_N\neq\emptyset$.  Fix
$x\in \bigcap_N C_N$.  It follows that $Z$ is $xQx^{-1}N$-projective
for each $N$, and so $Z$ is $xQx^{-1}$-projective.  Note that for
any $l\in L$ we have $(lx)Q(lx)^{-1}\neq Q$ since $lx\notin L$. From
the conjugacy of vertices \cite[4.6]{profiniterelprojarxiv} it follows
that $Z$ does not have vertex $Q$.

Since there is an indecomposable summand of $U\res{L}$ with vertex
$Q$ the contrapositive of the previous argument shows there is some
$N_0\lhd_O G$ such that this summand is not projective relative to
$xQN_0x^{-1}$ for any $x\notin L$.  From now on we work within the
cofinal system of $N\lhd_O G$ with $N\leq N_0$.

Let $\mathcal{T}$ denote the (finite, non-empty) set of isomorphism
classes of indecomposable $V\ds U\res{L}$ such that $U\ds V\ind{G}$.
We wish to find an element of $\mathcal{T}$ with vertex $Q$.  Choose
$N$ in our system.  Since $U\ds U\res{QN}\ind{G}$ we take some
indecomposable summand $V\ds U\res{QN}\ind{L}$ such that $U\ds
V\ind{G}$.  Since $V\ds V\ind{G}\res{L}$, by
Lemma \ref{summandsofVupanddown} we have two possibilities:
\begin{itemize}
\item $V\ds U\res{L}$ or
\item Each summand of $U\res{L}$ is projective relative to $xQNx^{-1}\cap
L$ for some $x\notin L$.
\end{itemize}
By our choice of $N$ the latter cannot happen, so that $V\ds
U\res{L}$ and so $V\in \mathcal{T}$.  Thus for all $N$ there is an
element of $\mathcal{T}$ which is $QN$-projective, and so there is
an element of $\mathcal{T}$ which has vertex $Q$, as required.
\end{proof}

\begin{prop}\label{GCforLopen}
Let $G$ be a virtually pro-$p$ group, $Q$ a closed pro-$p$ subgroup
of $G$ and let $L$ be an open subgroup of $G$ containing
$\textnormal{N}_G(Q)$.  Then we have the following correspondence
between finitely generated indecomposable $k\db{G}$-modules with
vertex $Q$, and finitely generated indecomposable $k\db{L}$-modules
with vertex $Q$:
\begin{enumerate}
\item If $U$ is a finitely generated indecomposable $k\db{G}$-module with vertex $Q$,
then there is a unique indecomposable summand $f(U)$ of $U\res{L}$
with vertex $Q$, and the rest have vertex in $\Y$.
\item If $V$ is a finitely generated indecomposable $k\db{L}$-module with vertex $Q$,
then there is a unique indecomposable summand $g(V)$ of $V\ind{G}$
with vertex $Q$, and the rest have vertex in $\X$.
\item The given correspondence is one-one in the sense that $f(g(V))\iso
V$ and \newline $g(f(U))\iso U$.
\end{enumerate}
\end{prop}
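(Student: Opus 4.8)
The plan is to transcribe the classical argument (compare \cite[3.12]{benson}), with Lemmas \ref{summandsofVupanddown}, \ref{summandsofVinduced} and \ref{appropriate summand of UresL} playing the roles of their finite-group analogues; the one new ingredient is a rigidity fact that substitutes for order comparisons. I would first record it (it is elementary): a closed pro-$p$ subgroup $Q$ of a virtually pro-$p$ group $G$ is never $G$-conjugate to a proper subgroup of itself. Indeed, if $gQg^{-1}\subseteq Q$ then, in a cofinal system of $N\lhd_O G$, each $g(QN)g^{-1}=gQg^{-1}N$ is an open subgroup contained in $QN$ of the same (finite) index, hence equal to $QN$; intersecting over $N$ gives $gQg^{-1}=Q$, so $g\in\norm{G}(Q)$. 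Two consequences follow: (a) closed subgroups each of which is $G$-subconjugate to the other are $G$-conjugate; and (b) a short computation with (a), together with the stability of $\Y$ under $L$-conjugation, shows that no $G$-conjugate of $Q$ lies in $\X$ and no $L$-conjugate of $Q$ lies in $\Y$. Since $\X$ and $\Y$ are closed under passage to closed subgroups, (b) says: a relatively $\X$-projective $k\db{G}$-module and a relatively $\Y$-projective $k\db{L}$-module have no indecomposable summand of vertex $Q$.

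For part (2): if $V$ is indecomposable of vertex $Q$ it is $Q$-projective, so Lemma \ref{summandsofVinduced} gives $V\ind{G}\iso U\oplus U_1$ with $U$ indecomposable, $V\ds U\res{L}$ and $U_1$ relatively $\X$-projective. I claim $U$ has vertex $Q$: being a summand of $V\ind{G}$ it is $Q$-projective, so a vertex $P$ of it is $G$-subconjugate to $Q$, while applying Mackey's formula to $U\res{P}\ind{G}\res{L}$ realizes $Q$ (a vertex of the summand $V$ of $U\res{L}$) as $G$-subconjugate to $P$; by (a), $P$ is conjugate to $Q$. Moreover $U$ is not a summand of $U_1$, since otherwise it would be $\X$-projective, contradicting (b); hence $U$ occurs in $V\ind{G}$ with multiplicity one, and Krull--Schmidt \cite[2.1]{symondspermcom2} together with (b) makes $U$ the unique indecomposable summand of $V\ind{G}$ of vertex $Q$, the remaining summands having vertex in $\X$. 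Put $g(V):=U$.

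For part (1): given $U$ indecomposable of vertex $Q$, Lemma \ref{appropriate summand of UresL} yields an indecomposable $V$ of vertex $Q$ with $V\ds U\res{L}$ and $U\ds V\ind{G}$, and Lemma \ref{summandsofVupanddown} gives $V\ind{G}\res{L}\iso V\oplus V_1$ with $V_1$ relatively $\Y$-projective. By (b), $V$ is not a summand of $V_1$, so $V$ occurs in $V\ind{G}\res{L}$ with multiplicity one; since $U\ds V\ind{G}$ while $V\ds U\res{L}$, Krull--Schmidt forces $V$ to occur in $U\res{L}$ also with multiplicity one and every other indecomposable summand of $U\res{L}$ to be isomorphic to a summand of $V_1$, hence by (b) not of vertex $Q$. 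So $V$ is the unique indecomposable summand of $U\res{L}$ of vertex $Q$; put $f(U):=V$, noting $U\ds f(U)\ind{G}$. Part (3) is then formal: $U$ has vertex $Q$ and $U\ds f(U)\ind{G}$, so $U\iso g(f(U))$ by the uniqueness established in (2); and $V$ has vertex $Q$ and $V\ds g(V)\res{L}$, so $V\iso f(g(V))$ by the uniqueness established in (1). Thus $f$ and $g$ are mutually inverse.

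I expect the real obstacle to be not any module-theoretic manipulation but the vertex bookkeeping: being sure that ``vertex $Q$'' is genuinely detected rather than merely ``vertex conjugate to $Q$'', and that $\X$ and $\Y$ do not inadvertently contain $Q$ up to the relevant conjugacy. The rigidity fact above is precisely what controls this; it is the profinite replacement for the order counting that renders these points automatic over finite groups, and with it in hand the rest is a routine transcription.
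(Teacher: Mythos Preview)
Your argument is correct and follows essentially the same route as the paper: both deduce (1), (2), (3) directly from Lemmas \ref{summandsofVupanddown}, \ref{summandsofVinduced} and \ref{appropriate summand of UresL} together with Krull--Schmidt, and the paper's terse phrases ``if it had smaller vertex then the Mackey decomposition theorem shows that $V$ would as well'' and ``the rest have vertex in $\X$/$\Y$'' are precisely where your rigidity fact and its consequences (a), (b) are being invoked implicitly. The only difference is expository: you isolate and prove the no-self-subconjugacy statement explicitly (the paper leans on the vertex-uniqueness theory of \cite[4.6]{profiniterelprojarxiv} for this), and you spell out part (3) where the paper simply says ``This is clear.''
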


\begin{proof}
\begin{enumerate}
\item By Lemma \ref{appropriate summand of UresL} we have that $U\ds V\ind{G}$ for some
finitely generated indecomposable $k\db{L}$-module $V$ with vertex
$Q$. Thus $U\res{L}\ds V\ind{G}\res{L}$.  By
Lemma \ref{summandsofVupanddown}, $V$ is the only summand of
$V\ind{G}\res{L}$ with vertex $Q$ and the rest have vertex in $\Y$,
so that $U\res{L}$ has at most one summand with vertex $Q$.  On the
other hand, again by Lemma \ref{appropriate summand of UresL} we have that
$U\res{L}$ has at least one summand with vertex $Q$. Hence we set
$f(U)=V$ and the claim holds.
\item We have $V\ds V\ind{G}\res{L}$ so we choose an indecomposable
summand $U\ds V\ind{G}$ such that $V\ds U\res{L}$.  By
Lemma \ref{summandsofVinduced}, we have $V\ind{G}\iso U\oplus U_1$ where
$U_1$ is $\X$-projective.  The module $U$ has vertex $Q$ since if it
had smaller vertex then the Mackey decomposition theorem shows that
$V$ would as well.  Thus, we take $g(V)=U$ and we are done.
\item This is clear.
\end{enumerate}
\end{proof}

\section{A more general case}\label{general L GC}

We retain the notation from above but drop the assumption that $L$
is open in $G$.  When $L$ was open and $U,V$ were Green
correspondents as above, we see in particular that $V\ds U\res{L}$.
This need not be the case when $L$ has infinite index in $G$ - an
example of this phenomenon can be found in the last section of
\cite{symondsdoublecoset}. For this reason we now focus on the map
$g$.

Let $V$ be an indecomposable finitely generated $k\db{L}$-module
with vertex $Q$.  By \cite[5.1]{profiniterelprojarxiv} we can choose a
cofinal inverse system of $N\lhd_O G$ for which $V\ind{LN}$ is
indecomposable.  We work in this system as we prove the following
key lemma:

\begin{lemma}\label{induced module has same vertex}
For any given $M\lhd_O G$ in our inverse system the module
$V\ind{LM}$ has vertex $Q$.
\end{lemma}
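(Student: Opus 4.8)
The plan is to pin the vertex of $V\ind{LM}$ down from above and from below: to show that $V\ind{LM}$ is relatively $Q$-projective (so that its vertex lies inside some conjugate of $Q$) and, separately, that $Q$ is subconjugate to that vertex; mutual subconjugacy of pro-$p$ subgroups then forces the vertex to be $LM$-conjugate to $Q$. The one genuinely new feature compared with the finite case is that $L$ need not be open in $LM$, so Proposition~\ref{GCforLopen} is unavailable and the relevant estimate must be made directly.

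\emph{Upper bound.} Since $V$ has vertex $Q$, by \cite[3.7]{profiniterelprojarxiv} there is a $k\db{Q}$-module $S$ with $V\ds S\ind{L}$, and inducing gives $V\ind{LM}\ds S\ind{LM}$; hence $V\ind{LM}$ is relatively $Q$-projective. Because $M$ lies in our inverse system, $V\ind{LM}$ is indecomposable and so has a vertex $R$, which by the conjugacy of vertices \cite[4.6]{profiniterelprojarxiv} we may choose so that $R\leq Q$.

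\emph{Lower bound.} Here I restrict back to $L$ and use the Mackey decomposition \cite[2.2]{symondsdoublecoset}. The identity double coset in $V\ind{LM}\res{L}$ contributes $V$, so $V\ds V\ind{LM}\res{L}$. Writing $V\ind{LM}\ds T\ind{LM}$ for a $k\db{R}$-module $T$ (relative $R$-projectivity), restricting to $L$ and applying Mackey to $T\ind{LM}\res{L}$ gives $V\ind{LM}\res{L}\ds\bigoplus_{y}y(T)\res{yRy^{-1}\cap L}\ind{L}$, with $y$ running over $L\backslash LM/R$. Since $V$ is indecomposable, the Krull--Schmidt theorem \cite[2.1]{symondspermcom2} places $V$ as a summand of a single term, so $V$ is projective relative to $yRy^{-1}\cap L$ for some $y\in LM$. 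As $V$ has vertex $Q$, some $L$-conjugate $lQl^{-1}$ of $Q$ is contained in $yRy^{-1}\cap L\leq yRy^{-1}$; setting $c=y^{-1}l\in LM$ we obtain $cQc^{-1}\leq R$.

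\emph{Reconciliation.} Combining the two bounds, $cQc^{-1}\leq R\leq Q$ with $c\in LM$. A compactness argument finishes the proof: for each $N\lhd_O G$ the subgroups $(cQc^{-1})N/N$ and $QN/N$ of $G/N$ are finite of the same order (as $N$ is normal), the former contained in the latter, so $(cQc^{-1})N=QN$; intersecting over all $N$ yields $cQc^{-1}=Q$, whence $R=Q$ and $V\ind{LM}$ has vertex $Q$. I expect the reconciliation step — the passage from mutual subconjugacy to genuine conjugacy of closed pro-$p$ subgroups — together with the bookkeeping needed to keep the Mackey, Krull--Schmidt, and relative-projectivity manipulations legitimate in the profinite category, to be where the real care lies; the underlying strategy is a direct adaptation of the finite-index case.
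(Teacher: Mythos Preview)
Your upper bound and your reconciliation step are both fine, and indeed the paper's endgame is the same: from $R\leq Q$ together with $Q$ subconjugate to $R$ one gets $R=Q$ exactly as you argue. The difficulty is entirely in your lower bound, and it is not merely bookkeeping.

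The step that does not go through as written is the application of Mackey to $T\ind{LM}\res{L}$. You correctly observe that $L$ need not be open in $LM$; concretely, $[LM:L]=[M:L\cap M]$ can be infinite. In that case the double coset space $L\backslash LM/R$ is a genuinely profinite (non-discrete) space, and the Mackey formula cited from \cite[2.2]{symondsdoublecoset} does not hand you a discrete direct sum or product of modules of the form $y(T)\res{yRy^{-1}\cap L}\ind{L}$ indexed by individual points $y$. The subgroups $yRy^{-1}\cap L$ vary with $y$ over a profinite parameter space, so there is no ``single term'' for Krull--Schmidt to place $V$ inside, and hence no way to extract a specific $y\in LM$ with $V$ relatively $(yRy^{-1}\cap L)$-projective. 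Every use of Mackey in the paper restricts to an \emph{open} subgroup precisely to keep the decomposition finite; your argument drops that safeguard at exactly the point where it matters.

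The paper's proof is built to circumvent this. Instead of restricting to $L$, it restricts $S\ind{LM}$ to the open subgroups $LN$ for $N\leq M$ in the chosen inverse system, so that Mackey gives a genuine finite decomposition and one obtains, for each such $N$, an element $x_N\in LM$ with $V\ind{LN}$ relatively $x_NRx_N^{-1}$-projective. A compactness argument on the closed sets $C_N=\{y\in LM:yRy^{-1}\text{ is a vertex of }V\ind{LN}\}$ then produces a single $y$ working for all $N$. Finally the paper passes to coinvariants, using $(V\ind{LN})_N\iso V_{L\cap N}$, to see that each $V_{L\cap N}$ is $yRNy^{-1}$-projective, and invokes the inverse-limit criterion \cite[3.5]{profiniterelprojarxiv} to conclude that $V$ itself is $yRy^{-1}$-projective. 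In short, the compactness-plus-coinvariants machinery is not an alternative to your direct Mackey argument but the substitute for it: it is what one has to do because the direct restriction to $L$ is unavailable.
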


\begin{proof}
Certainly $V\ind{LM}$ is relatively $Q$-projective, so we choose
some vertex $R$ of $V\ind{LM}$ contained in $Q$.  We will show that
$V$ is $R$-projective. Consider the cofinal inverse system of those
$N\lhd_O G$ contained inside $M$, noting that for each such $N$ the
module $V\ind{LN}$ is indecomposable.

Let $S$ be a $k\db{R}$-module such that $V\ind{LM}\ds S\ind{LM}$.
Then for each $N\leq M$ we have
$$V\ind{LN}\ds V\ind{LM}\res{LN}\ds S\ind{LM}_R\res{LN}\iso \bigoplus_{x\in LN\backslash LM/R}x(S)\res{xRx^{-1}\cap LN}\ind{LN}$$
so that $V\ind{LN}$ is $xRx^{-1}$-projective for some $x\in LM$ and
hence has vertex $xRx^{-1}$. Denote by $C_N$ the non-empty set of
all $y\in LM$ with the property that $yRy^{-1}$ is a vertex of
$V\ind{LN}$. Then $C_N$ is a finite union of right cosets of $LN$ so
is a closed subset of $LM$. We would like to show that $\bigcap_N
C_N\neq\emptyset$.

Given $N_1,\hdots,N_n$, let $N'=N_1\cap\hdots\cap N_n$.  Then by the
argument above $C_{N'}\neq \emptyset$. But $C_{N'}\subseteq C_{N_i}$
for each $i$, since if $V\ind{LN'}$ is induced from a
$yRy^{-1}$-module, then so is each $V\ind{LN_i}$. Thus,
$\emptyset\neq C_{N'}\subseteq C_{N_1}\cap \hdots \cap C_{N_n}$ and
so by compactness $\bigcap_N C_N\neq\emptyset$. It follows that we
can find some $y\in LM$ so that $V\ind{LN}$ is $yRy^{-1}$-projective
for each $N\leq M$.

We move now from induced modules to coinvariant modules.  Note that
if $V\ind{LN}$ is $yRy^{-1}$-projective then it is certainly
$yRNy^{-1}$-projective, so for some $yRNy^{-1}$-module $T$ we have
$V\ind{LN}\ds T\ind{LN}$.  Now
$$V_{L\cap N}\iso (V\ind{LN})_N\ds (T\ind{LN})_N\iso T_N\ind{LN}$$
by \cite[2.6]{profiniterelprojarxiv} so that $V_{L\cap N}$ is
$yRNy^{-1}$-projective for each $N$ in our system.  Now by
\cite[3.5]{profiniterelprojarxiv} the module $V$ is $yRy^{-1}$-projective
and so some conjugate of $yRy^{-1}$ contains $Q$.  Thus $R\leq Q\leq
zRz^{-1}$ for some $z\in LM$, so $R=Q$ and we are done.
\end{proof}

Recall that $L$ contains the normalizer of $Q$ in $G$.

\begin{corol}\label{GCup}
Let $V$ be an indecomposable finitely generated $k\db{L}$-module with vertex $Q$.  Then
$V\ind{G}$ has a unique summand $g(V)$ with vertex $Q$, and the rest
have vertex in $\X$.
\end{corol}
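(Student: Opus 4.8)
The goal is to pass from the open-$L$ correspondence (Proposition~\ref{GCforLopen}) to the case of arbitrary closed $L$, working entirely with the induction map $g$. The plan is to use the cofinal inverse system of $N\lhd_O G$ for which $V\ind{LN}$ is indecomposable, and to note that each $LN$ is an \emph{open} subgroup of $G$ containing $L$, hence containing $\norm{G}(Q)$. Thus Proposition~\ref{GCforLopen} applies with $LN$ in place of $L$: the module $V\ind{LN}$, which by Lemma~\ref{induced module has same vertex} has vertex $Q$, induces up to a $k\db{G}$-module $g_G(V\ind{LN})$ possessing a unique indecomposable summand with vertex $Q$, all others having vertex in the set $\X_{LN}=\{X\leq_C G\mid X\leq xQx^{-1}\cap Q,\ x\notin LN\}$. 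Since $V\ind{G}\iso (V\ind{LN})\ind{G}$ by transitivity of induction, this already produces a summand of $V\ind{G}$ with vertex $Q$.

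First I would establish existence and the vertex-in-$\X$ property for the non-distinguished summands. Fix one $N$ in the system; then $V\ind{G}\iso (V\ind{LN})\ind{G}$ decomposes, by part (2) of Proposition~\ref{GCforLopen} applied to the open subgroup $LN$, as $U\oplus U_1$ with $U$ indecomposable of vertex $Q$ and every summand of $U_1$ having vertex in $\X_{LN}$. Since $\X_{LN}\subseteq\X$ (because $x\notin LN$ implies $x\notin L$), every summand of $V\ind{G}$ other than $U$ either is $U$-isomorphic or has vertex in $\X$; so it suffices to rule out a second summand with vertex $Q$. Here I would invoke uniqueness: by the Krull--Schmidt theorem \cite[2.1]{symondspermcom2} the decomposition of $V\ind{G}$ into indecomposables is essentially unique, so the count of summands with vertex $Q$ is an invariant of $V\ind{G}$, independent of which $N$ we used; since for \emph{some} (indeed every) $N$ that count is exactly one, it is one. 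Define $g(V)$ to be this unique summand. This gives the statement.

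The one point requiring care --- and the main obstacle --- is checking that the $U$ produced really has vertex exactly $Q$ and not some conjugate, and that the argument is genuinely independent of $N$. For the vertex: $U\ds V\ind{G}$, so $U$ is $Q$-projective, hence has a vertex inside a conjugate of $Q$; and $U$ being a summand of $(V\ind{LN})\ind{G}$ with $V\ind{LN}$ of vertex $Q$, the Mackey argument as in the proof of part (2) of Proposition~\ref{GCforLopen} forces the vertex to be $Q$ rather than a proper subgroup, using again that $V\ind{LN}$ itself has vertex $Q$ (Lemma~\ref{induced module has same vertex}). For independence of $N$: if $N'\leq N$ then $LN'\leq LN$, and the unique vertex-$Q$ summand of $(V\ind{LN'})\ind{G}$ must coincide with that of $(V\ind{LN})\ind{G}$ since both equal the unique vertex-$Q$ summand of the fixed module $V\ind{G}$ --- this is precisely where Krull--Schmidt does the work. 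So no compactness argument is even needed here; the whole corollary reduces to noticing that $V\ind{G}$ factors through an open overgroup of $L$ to which the previous proposition applies, together with the vertex computation of the preceding lemma.
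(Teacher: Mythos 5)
Your proposal is correct and follows essentially the same route as the paper: fix one $N$ in the system so that $V\ind{LN}$ is indecomposable, use Lemma~\ref{induced module has same vertex} to see it has vertex $Q$, apply Proposition~\ref{GCforLopen}(2) to the open subgroup $LN\supseteq\norm{G}(Q)$, and observe $\X_{LN}\subseteq\X$. The additional Krull--Schmidt and independence-of-$N$ discussion is harmless but unnecessary, since uniqueness of the vertex-$Q$ summand is already part of the statement of Proposition~\ref{GCforLopen}(2) applied to the single chosen $N$.
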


\begin{proof}
We choose some $M\lhd_O G$ for which $V\ind{LM}$ is indecomposable.
By Lemma \ref{induced module has same vertex}, $V\ind{LM}$ has vertex $Q$.
But now by Proposition \ref{GCforLopen}, $V\ind{G}\iso V\ind{LM}\ind{G}$ has a
unique summand $g(V)$ with vertex $Q$ and the rest have vertex in
$$\{X\leq_C G\,|\, X\leq xQx^{-1}\cap Q, x\notin LM\}$$
but this is a subset of $\X$ and so we are done.
\end{proof}

We can now prove Theorem 1.1:

\begin{proof}
The map $g$ from Corollary \ref{GCup} restricted to those modules with source
$S$ has the appropriate image and domain.  We need only check that
$g$ is bijective.

First we show that if $U$ is an indecomposable $k\db{G}$-module with
vertex $Q$ and source $S$, then there is some indecomposable
$k\db{L}$-module $V$ with vertex $Q$ and source $S$ such that $U\iso
g(V)$.  But this is clear since if $S\ind{L}\iso
V_1\oplus\hdots\oplus V_n$ is a decomposition into indecomposable
summands then
$$U\ds S\ind{G}\iso S\ind{L}\ind{G}\iso V_1\ind{G}\oplus\hdots\oplus V_n\ind{G}$$
and so $U\ds V_i\ind{G}$ for some $i$ since $U$ has local
endomorphism ring by \cite[4.4]{profiniterelprojarxiv}.  Clearly $V_i$
has vertex $Q$. This shows that $g$ is surjective.

It remains to show that if $V,W$ are finitely generated
indecomposable $k\db{L}$-modules having vertex $Q$ and source $S$,
and $g(V)\iso g(W)$ as $k\db{G}$-modules, then $V\iso W$ as
$k\db{L}$-modules.  Choose a cofinal inverse system of $N\lhd_O G$
for which both $V\ind{LN}$ and $W\ind{LN}$ are indecomposable. Let
$g(V)\iso U\iso g(W)$.  The modules $V\ind{LN}$ and $W\ind{LN}$ are
both Green correspondents of $U$ in the sense of Proposition \ref{GCforLopen}
and so $V\ind{LN}\iso W\ind{LN}$ for each $N$ in our inverse system.
But
\begin{align*}
V\ind{LN} & \iso W\ind{LN} \\
\implies (V\ind{LN})_N & \iso (W\ind{LN})_N \\
\implies V_{L\cap N} & \iso W_{L\cap N}
\end{align*}
for each $N$, and so $V\iso W$ by \cite[3.4]{profiniterelprojarxiv}.
\end{proof}

\section{Acknowledgements}

The author gratefully acknowledges the help and support of his PhD
supervisor Peter Symonds throughout this exciting project.  Thanks also
to the referee for helpful comments.

\bibliographystyle{plain}
\bibliography{bibliography}

\end{document}